\newtheorem{thm}{Theorem}[section]
\newtheorem{prop}[thm]{Proposition}
\newtheorem{lem}[thm]{Lemma}
\renewcommand{\le}{\leqslant}
\renewcommand{\ge}{\geqslant}
\newcommand{\E}{\mathbb{E}}
\newcommand{\EE}{\mathbf{E}}
\newcommand{\N}{\mathbb{N}}
\newcommand{\1}{\mathbf{1}}
\renewcommand{\P}{\mathbb{P}}
\newcommand{\PP}{\mathbf{P}}
\newcommand{\eps}{\varepsilon}
\def\d{{\mathrm{d}}}
\newcommand{\bho}{{\beta,h,\omega}}
\newcommand{\F}{\textsc{f}}
\title{On the delocalized phase of the random pinning model}
\author{Jean-Christophe Mourrat}
\address{Ecole polytechnique fédérale de Lausanne, institut de mathématiques, station 8, 1015 Lausanne, Switzerland}
\begin{document}
\begin{abstract}
We consider the model of a directed polymer pinned to a line of i.i.d. random charges, and focus on the interior of the delocalized phase. We first show that in this region, the partition function remains bounded. We then prove that for almost every environment of charges, the probability that the number of contact points in $[0,n]$ exceeds $c \log n$ tends to $0$ as $n$ tends to infinity. Our proofs rely on recent results of \cite{bgh10,ch10}.
\end{abstract}
\maketitle
\section{Introduction}
Let $\tau = (\tau_i)_{i \in \N}$ be a sequence such that $\tau_0 = 0$ and $(\tau_{i+1}-\tau_i)_{i \ge 1}$ are independent and identically distributed random variables with values in $\N^* = \{1,2,\ldots\}$. Let $\PP$ be the distribution of $\tau$, $\EE$ the associated expectation, and $K(n) = \PP[\tau_1 = n]$. We assume that there exists $\alpha \ge 0$ such that
\begin{equation}
\label{regvar}
\frac{\log K(n)}{\log n} \xrightarrow[n \to \infty]{} -(1+\alpha).
\end{equation}
As an example, one can think about the sequence $\tau$ as the sequence of arrival times at $0$ of a one-dimensional simple random walk (and in this case, $\alpha = 1/2$). In a slight abuse of notation, we will look also at the sequence $\tau$ as a set, and write for instance $n \in \tau$ instead of $\exists i : n = \tau_i$. 

Let $\omega = (\omega_k)_{k \in \N}$ be independent and identically distributed random variables. We write $\P$ for the law of $\omega$, and $\E$ for the associated expectation. We will refer to $\omega$ as the \emph{environment}. We assume that the $\omega_k$ are centred random variables, and that they have exponential moments of all order. 
%
%
Let $\beta \ge 0, h \ge 0$, and $n \in \N^*$. We consider the probability measure $\PP_n^\bho$ (expectation $\EE_n^\bho$) which is defined as the following Gibbs transformation of the measure $\PP$~:
$$
\frac{\d \PP_n^\bho}{\d \PP}(\tau) = \frac{1}{Z_n^\bho} \ \exp\left( \sum_{k = 0}^{n-1} (\beta \omega_k - h) \1_{\{k \in \tau\}} \right) \1_{\{n \in \tau\}}.
$$
In the above definition, $\beta$ can be thought of as the inverse temperature, $h$ as the disorder bias, and $Z_n^\bho$ is a normalization constant called the \emph{partition function},
$$
Z_n^\bho = \EE\left[ \exp\left( \sum_{k = 0}^{n-1} (\beta \omega_k - h) \1_{\{k \in \tau\}} \right) \1_{\{n \in \tau\}} \right].
$$
At the exponential scale, the asymptotic behaviour of the partition function is captured by the \emph{free energy} $\F(\beta,h)$ defined as
$$
\F(\beta,h) = \lim_{n \to +\infty} \frac{1}{n} \ \log Z_n^\bho.
$$
Superadditivity of the partition function implies that this limit is well defined almost surely, and that it is deterministic (see for instance \cite[Theorem 4.1]{gg}). Assumption~(\ref{regvar}) implies that $\F(\beta,h) \ge 0$. It is intuitively clear that the free energy can become strictly positive only if the set $\tau \cap [0,n]$ is likely to contain many points under the measure $\PP_n^\bho$. We thus say that we are in the \emph{localized phase} if~$\F(\beta,h) > 0$, and in the \emph{delocalized phase} otherwise. One can show \cite[Theorem~11.3]{holl} that for every $\beta \ge 0$, there exists $h_c(\beta) \ge 0$ such that
$$
\begin{array}{lll}
h < h_c(\beta) & \Rightarrow & \text{localized phase, i.e. } \F(\beta,h) > 0, \\
h \ge h_c(\beta) & \Rightarrow & \text{delocalized phase, i.e. } \F(\beta,h) = 0,
\end{array}
$$
and moreover, the function $\beta \mapsto h_c(\beta)$ is strictly increasing. 
\section{Statement of the main results}
We focus here on the interior of the delocalized phase, that is to say when $h > h_c(\beta)$. Note that, due to the strict monotonicity of the function $h_c(\cdot)$, one sits indeed in the interior of the delocalized phase if one fixes $h = h_c(\beta_0)$ and considers any inverse temperature $\beta < \beta_0$.

By definition, the partition function is known to grow subexponentially in this region. In \cite[Remark p.~417]{bs10}, the authors ask whether the partition function remains bounded there. We answer positively to this question, and can in fact be slightly more precise.
\begin{thm}
\label{Zborne}
Let $\beta \ge 0$ and $h > h_c(\beta)$. For almost every environment, one has
$$
\sum_{n = 1}^{+ \infty} Z_n^\bho < + \infty.
$$
\end{thm}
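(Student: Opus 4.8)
The plan is to convert the almost-sure statement into an estimate on averaged fractional moments. Fix $\gamma \in (0,1)$. Since $t \mapsto t^\gamma$ is subadditive on $[0,\infty)$, monotone convergence gives $\bigl(\sum_{n\ge1} Z_n^\bho\bigr)^\gamma \le \sum_{n\ge1} (Z_n^\bho)^\gamma$, whence by Tonelli
\[
\E\Bigl[\bigl(\textstyle\sum_{n\ge1} Z_n^\bho\bigr)^\gamma\Bigr] \le \sum_{n\ge1} \E\bigl[(Z_n^\bho)^\gamma\bigr].
\]
Consequently, if for some $\gamma\in(0,1)$ the right-hand side is finite, then $\sum_n Z_n^\bho<+\infty$ for almost every environment. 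The whole problem thus reduces to establishing a summable decay of the fractional moments $\E[(Z_n^\bho)^\gamma]$.

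Before estimating these moments, I would record why both the fractional power and the disorder are genuinely needed. Writing $\lambda(\beta)=\log\E[e^{\beta\omega_1}]$ and annealing the environment replaces each weight $\beta\omega_k$ by $\lambda(\beta)$, so that $\E[Z_n^\bho]$ is exactly the homogeneous pinning partition function with pinning parameter $\lambda(\beta)-h$; its free energy is strictly positive as soon as $h<\lambda(\beta)$. Hence the annealed critical point is $\lambda(\beta)$, and in the strip $h_c(\beta)<h<\lambda(\beta)$ the annealed partition function grows exponentially while the quenched one does not. Moreover Jensen's inequality gives $\E[(Z_n^\bho)^\gamma]\ge\exp\bigl(\gamma\,\E[\log Z_n^\bho]\bigr)$, and since $\tfrac1n\E[\log Z_n^\bho]\to\F(\beta,h)=0$ we get $\liminf_n\tfrac1n\log\E[(Z_n^\bho)^\gamma]\ge 0$. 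So no exponential decay can be hoped for: the target is a polynomial bound $\E[(Z_n^\bho)^\gamma]\le C\,n^{-(1+\delta)}$ with $\delta>0$.

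I would then split according to two regimes. When $h>\lambda(\beta)$ and $\alpha>0$ the annealed computation already suffices: by the big-jump asymptotics for subcritical homogeneous pinning one has $\E[Z_n^\bho]\sim C\,K(n)=n^{-(1+\alpha)+o(1)}$, which is summable, so even $\gamma=1$ works. The substantial case is $h_c(\beta)<h\le\lambda(\beta)$ (together with the borderline $\alpha=0$), where annealing is useless. Here I would run the coarse-graining with change of measure: cut $[0,n]$ into blocks of length $\ell$, use the subadditivity inequality to reduce $\E[(Z_n^\bho)^\gamma]$ to a sum, over the choice of visited blocks, of fractional moments of single-block partition functions, and on each visited block tilt the law of $\omega$ so as to absorb the annealed energy gain at a controlled entropic price (finite since $\omega$ has exponential moments of all orders). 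The essential input is that, strictly inside the delocalized phase, the effective per-block cost is strictly favourable; this is precisely where I expect to invoke the variational characterization of the critical curve from \cite{ch10} together with the underlying quenched large-deviation principle of \cite{bgh10}, which encode that $h>h_c(\beta)$ is equivalent to strict negativity of the relevant entropy–energy functional.

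The main obstacle is exactly this last step: turning ``$h$ lies strictly inside the delocalized phase'' into a usable quantitative strict inequality, and then calibrating the triple $(\gamma,\ell,\text{tilt})$ so that this strict gap dominates both the entropic cost of the change of measure and the combinatorial cost of summing over coarse-grained configurations, leaving a genuinely summable ($n^{-(1+\delta)}$) tail. Controlling the $\gamma$-th moments uniformly over blocks — in particular keeping the exponential-moment penalties of the tilt bounded independently of the block — is the technical heart, and is where the strength of the results of \cite{bgh10,ch10} is indispensable.
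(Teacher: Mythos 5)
Your opening reduction is fine: by subadditivity of $t\mapsto t^\gamma$ for $\gamma\in(0,1)$, finiteness of $\sum_n \E[(Z_n^\bho)^\gamma]$ would indeed give the almost sure conclusion, and your observations about the annealed bound and the Jensen lower bound are correct. But everything after that is a plan, not a proof: the coarse-graining with change of measure is described, the key quantitative step (turning $h>h_c(\beta)$ into a strict per-block gap and calibrating $(\gamma,\ell,\text{tilt})$ so as to beat the entropic and combinatorial costs) is explicitly deferred, and you yourself identify it as the unresolved ``technical heart''. This is a genuine gap, and not a routine one. The fractional moment/coarse-graining machinery is built to prove delocalization for $h$ above an \emph{explicitly controlled} threshold, i.e.\ to prove upper bounds on $h_c(\beta)$; since $h_c(\beta)$ is itself defined implicitly as the boundary of the delocalized phase, there is no known way to make that machinery reach all the way down to $h_c(\beta)$, which is exactly what the theorem requires. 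Neither \cite{bgh10} nor \cite{ch10} supplies a per-block estimate of the kind you would need, so the claim that their variational characterization ``encodes'' the required strict inequality in your averaged, block-wise setting is an act of faith rather than an argument.

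The paper's route is entirely different and much softer: it never averages over the environment. One decomposes $Z_n^\bho$ according to the number $N$ of contact points, interchanges the sums over $n$ and $N$ to obtain the identity $\sum_{n\ge1} Z_n^\bho=\sum_{N\ge1} F_N^\bho$ with $F_N^\bho$ as in (\ref{def:F_N}), and then invokes the almost sure statement $\limsup_{N\to\infty}\frac1N\log F_N^\bho=h_c(\beta)-h<0$, extracted from \cite{ch10} (Lemma~\ref{lem:ch10}). The crucial point, which your scheme misses, is that the exponential decay is in the number of contacts $N$, not in the polymer length $n$; reorganizing the partition sum by $N$ is what converts the hypothesis $h>h_c(\beta)$ directly into summability, with no need for fractional moments, tilting, or polynomial decay in $n$. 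I would encourage you to either carry out your program in a regime where the coarse-graining is known to close (accepting that it will not cover all of $h>h_c(\beta)$), or to switch to the contact-number decomposition.
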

\noindent \textbf{Remark.}
This result implies that, in the interior of the delocalized phase, the \emph{unconstrained} (or \emph{free}) partition function $Z_{n,f}^\bho$ is also almost surely bounded (in fact, tends to $0$) as $n$ tends to infinity. Indeed, $Z_{n,f}^\bho$ is defined by
$$
Z_{n,f}^\bho = \EE\left[ \exp\left( \sum_{k = 0}^{n-1} (\beta \omega_k - h) \1_{\{k \in \tau\}} \right) \right],
$$
which is equal to
$$
\sum_{n'=n}^{+\infty}\EE\left[ \exp\left( \sum_{k = 0}^{n-1} (\beta \omega_k - h) \1_{\{k \in \tau\}} \right) ; \tau \cap [n,n'] = \{n'\}\right] \le \sum_{n'=n}^{+\infty} Z_{n'}^\bho \xrightarrow[n \to \infty]{\text{a.s.}} 0.
$$

\vspace{10pt}

Our second result concerns the number of points in the set $\tau \cap [0,n]$ under the measure $\PP_n^\bho$. Let us write $E_{n,N}$ for the event that $|\tau \cap [0,n]| > N$ (where we write $|A|$ for the cardinal of a set $A$).

\begin{thm}
\label{contact}
Let $\beta \ge 0$ and $h > h_c(\beta)$. For every $\eps > 0$ and for almost every environment, there exists $N_\eps,C_\eps > 0$ such that for any $N \ge N_\eps$ and any $n$~:
$$
\PP_n^\bho(E_{n,N}) \le \frac{C_\eps}{K(n)} \ e^{-N(h - h_c(\beta) - \eps)}.
$$
In particular, for every constant $c$ such that
$$
c > \frac{1+\alpha}{h-h_c(\beta)}
$$
and for almost every environment, one has
$$
\PP_n^\bho(E_{n,c \log n}) \xrightarrow[n \to \infty]{} 0.
$$
\end{thm}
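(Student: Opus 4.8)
The plan is to expand the partition function as a sum over contact configurations, to absorb the event $E_{n,N}$ into an exponentially small prefactor by slightly decreasing the pinning strength $h$, and to bound what remains using Theorem~\ref{Zborne} at a parameter that still lies strictly inside the delocalized phase. The factor $1/K(n)$ will come from an elementary lower bound on $Z_n^\bho$.

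Writing $0 = t_0 < t_1 < \cdots < t_\ell = n$ for the successive points of $\tau \cap [0,n]$, the renewal structure gives
\begin{equation*}
Z_n^\bho = \sum_{\ell \ge 1} \ \sum_{0 = t_0 < \cdots < t_\ell = n} \left( \prod_{i=1}^\ell K(t_i - t_{i-1}) \right) \prod_{j=0}^{\ell-1} e^{\beta \omega_{t_j} - h},
\end{equation*}
and $E_{n,N}$ is the event $\{\ell + 1 > N\}$, hence $\{\ell \ge N\}$. Fix $\eps \in (0, h - h_c(\beta))$ (for $\eps \ge h - h_c(\beta)$ the asserted bound is trivial) and set $h_\eps = h_c(\beta) + \eps$, so that $h_c(\beta) < h_\eps < h$. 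In each summand I would split $e^{-h\ell} = e^{-(h - h_\eps)\ell}\, e^{-h_\eps \ell}$; since $\ell \ge N$ on $E_{n,N}$ and $h - h_\eps = h - h_c(\beta) - \eps > 0$, one has $e^{-(h-h_\eps)\ell} \le e^{-(h - h_c(\beta) - \eps)N}$. Pulling this factor out and dropping the constraint $\ell \ge N$ from the remaining sum (which only enlarges it into the full partition function at $h_\eps$) gives
\begin{equation*}
\EE\!\left[ \exp\!\left( \sum_{k=0}^{n-1} (\beta \omega_k - h)\1_{\{k \in \tau\}} \right) \1_{\{n \in \tau\}}\, \1_{E_{n,N}} \right] \le e^{-(h - h_c(\beta) - \eps)N}\, Z_n^{\beta, h_\eps, \omega}.
\end{equation*}

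For the denominator I would restrict to the single-jump event $\{\tau_1 = n\}$, which carries only the contact at $0$, yielding $Z_n^\bho \ge K(n)\, e^{\beta \omega_0 - h}$. Dividing the previous display by this bound gives
\begin{equation*}
\PP_n^\bho(E_{n,N}) \le \frac{e^{h - \beta \omega_0}}{K(n)}\, Z_n^{\beta, h_\eps, \omega}\, e^{-(h - h_c(\beta) - \eps)N}.
\end{equation*}
Because $h_\eps > h_c(\beta)$, Theorem~\ref{Zborne} applies at $(\beta, h_\eps)$ and yields $\sum_n Z_n^{\beta, h_\eps, \omega} < \infty$ almost surely, so $C(\omega) := \sup_n Z_n^{\beta, h_\eps, \omega} < \infty$ for almost every environment. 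Taking $C_\eps = C(\omega)\, e^{h - \beta \omega_0}$ then gives the first claim (with $N_\eps = 1$). For the corollary I would set $N = c \log n$: the bound becomes $C_\eps\, K(n)^{-1} n^{-c(h - h_c(\beta) - \eps)}$, and since~(\ref{regvar}) gives $K(n)^{-1} = n^{(1+\alpha) + o(1)}$, the right-hand side is $n^{(1+\alpha) - c(h - h_c(\beta) - \eps) + o(1)}$. As $c > (1+\alpha)/(h - h_c(\beta))$, I can fix $\eps$ small enough that $c > (1+\alpha)/(h - h_c(\beta) - \eps)$ as well; the exponent is then strictly negative and the probability tends to $0$.

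The whole argument is a soft manipulation once Theorem~\ref{Zborne} is granted, so the real work sits in that earlier result (and hence in \cite{bgh10,ch10}). The only delicate point is the choice of the auxiliary parameter $h_\eps$: it must stay above $h_c(\beta)$ so that Theorem~\ref{Zborne} can be invoked, yet below $h$ so that the factor $e^{-(h-h_\eps)\ell}$ genuinely decays in $\ell$. Both requirements hold exactly on the range $0 < \eps < h - h_c(\beta)$, and it is the single parameter $\eps$ controlling both margins that degrades the exponential rate from $h - h_c(\beta)$ to $h - h_c(\beta) - \eps$.
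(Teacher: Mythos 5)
Your argument is correct, and it takes a recognizably different route from the paper's. The paper first establishes Proposition~\ref{p:main}, which bounds the full sum $\sum_n Z_n^\bho(E_{n,N})$ by $C_\eps e^{-N(h-h_c(\beta)-\eps)}$; this is done by summing the renewal decomposition over $n$ first, recognizing the quantities $F_N^\bho$ of (\ref{def:F_N}), and invoking the exact exponential rate $\limsup_N \frac1N \log F_N^\bho = h_c(\beta)-h$ from Lemma~\ref{lem:ch10}. The theorem then follows by bounding the numerator $Z_n^\bho(E_{n,N})$ term by term from that summed estimate and using the same denominator bound $Z_n^\bho \ge K(n)e^{\beta\omega_0 - h}$ that you use. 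You instead keep $n$ fixed and perform an exponential tilt in $h$: on $E_{n,N}$ you extract the factor $e^{-(h-h_\eps)N}$ and dominate the rest by the partition function at the shifted parameter $h_\eps = h_c(\beta)+\eps$, which still lies in the open delocalized phase, so that Theorem~\ref{Zborne} applies there. This bypasses Proposition~\ref{p:main} and any direct appeal to the rate in Lemma~\ref{lem:ch10}; it only uses the qualitative fact that $\sup_n Z_n^{\beta,h',\omega} < \infty$ for \emph{every} $h' > h_c(\beta)$, which is an economical and arguably more transparent reduction (the tilt identity is essentially $F_N^{\beta,h,\omega} = e^{-(h-h_\eps)N}F_N^{\beta,h_\eps,\omega}$ in disguise). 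What the paper's route buys in exchange is the slightly stronger intermediate statement that the whole series $\sum_n Z_n^\bho(E_{n,N})$, not just each term, is exponentially small in $N$. Two cosmetic points you may wish to tidy: for non-integer $N$ the event $\{\ell+1>N\}$ only gives $\ell \ge N-1$, costing a harmless constant $e^{h-h_\eps}$ absorbed into $C_\eps$; and the null set in your argument depends on $\eps$ through $h_\eps$, which is consistent with the quantifier order in the statement.
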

To our knowledge, results of this kind were known only under the averaged measure $\P\PP_n^\bho$, and with some restrictions on the distribution of $\omega$ due to the use of concentration arguments (see \cite{gt05} or \cite[Section~8.2]{gg}).

\section{Proofs}
In this section, we present the proofs of Theorems~\ref{Zborne} and \ref{contact}. 
Although one might think at first that such an approach cannot be of much help as far as the delocalized phase is concerned, we will rely on recent results obtained in \cite{bgh10,ch10}, where the authors develop a large deviations point of view of the problem. 
Let us define
\begin{equation}
\label{def:F_N}
F_N^\bho = \sum_{0 = l_0 < l_1 < \cdots < l_N} \prod_{i=0}^{N-1} K(l_{i+1}-l_i)e^{(\beta \omega_{l_i} - h)}.
\end{equation}
Our results are based on the following fact, due to \cite{ch10}, that holds both in the delocalized and in the localized phases. 
\begin{lem}
\label{lem:ch10}
For almost every environment, one has
$$
\limsup_{N \to +\infty} \frac{1}{N} \log F_N^\bho = h_c(\beta) - h.
$$
\end{lem}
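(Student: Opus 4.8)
The plan is to isolate the role of $h$ and then read off the constant $h_c(\beta)$ from the already-known behaviour of the free energy. Since each of the $N$ contacts in~\eqref{def:F_N} carries exactly one factor $e^{-h}$, one has $F_N^\bho = e^{-Nh}\,G_N$ with $G_N = \sum_{0 = l_0 < \cdots < l_N}\prod_{i=0}^{N-1}K(l_{i+1}-l_i)e^{\beta\omega_{l_i}}$ independent of $h$, so the claim is equivalent to $\limsup_N \frac1N\log G_N = h_c(\beta)$ for a.e.\ environment. The advantage of passing to $G_N$ is that I may then probe it using \emph{any} value of the pinning parameter, since they all share the same $G_N$. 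It is convenient to introduce the doubly-indexed weights $F_{N,n}^\bho = \sum_{0=l_0<\cdots<l_N=n}\prod_{i=0}^{N-1}K(l_{i+1}-l_i)e^{\beta\omega_{l_i}-h}$, counting trajectories with exactly $N$ contacts and right endpoint $n$. Decomposing $Z_n^\bho$ by its number of contacts and $F_N^\bho$ by its endpoint gives $Z_n^\bho = \sum_{N=1}^n F_{N,n}^\bho$ and $F_N^\bho = \sum_{n\ge N} F_{N,n}^\bho$, whence the identity $\sum_{n\ge1} Z_n^\bho = \sum_{N\ge1} F_N^\bho$.

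For the lower bound I would fix any $h' < h_c(\beta)$ and apply the identity to the pinning model with parameter $h'$. One is then in the localized phase, so $\frac1n\log Z_n^{\beta,h',\omega}\to\F(\beta,h')>0$ and hence $\sum_n Z_n^{\beta,h',\omega}=+\infty$ almost surely; by the identity this gives $\sum_N e^{-Nh'}G_N=+\infty$. Thus $G_N$ cannot decay faster than $e^{Nh'}$, which forces $\limsup_N\frac1N\log G_N\ge h'$. Letting $h'\uparrow h_c(\beta)$ yields $\limsup_N\frac1N\log G_N\ge h_c(\beta)$.

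The upper bound is the substantial half. I would introduce, for a contact density $\rho \in (0,1]$, the constrained free energy $\psi(\rho) = \lim \frac1n\log F_{N,n}^\bho$ along $N/n \to \rho$, and establish that this limit exists, is deterministic and is concave in $\rho$ for a.e.\ environment. Because each contact contributes a factor $e^{-h}$, one has $\psi(\rho) = \psi_0(\rho) - h\rho$ with $\psi_0$ independent of $h$. Summing over the at most $n$ values of $N$ gives $\F(\beta,h) = \sup_{\rho}\psi(\rho) = \sup_\rho(\psi_0(\rho)-h\rho)$; since $\F \ge 0$ always, $\F(\beta,h)=0$ holds exactly when $\psi_0(\rho)\le h\rho$ for every $\rho$, that is when $h \ge \sup_\rho \psi_0(\rho)/\rho$, identifying $h_c(\beta) = \sup_\rho \psi_0(\rho)/\rho$. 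Summing instead over the endpoint $n$—the sum being concentrated, by the same large-deviation estimate, on $n \approx N/\rho$—and using $\frac1N\log F_{N,n}^\bho \approx \psi(\rho)/\rho$ when $N/n\to\rho$ gives $\limsup_N\frac1N\log F_N^\bho = \sup_\rho \psi(\rho)/\rho = \sup_\rho \psi_0(\rho)/\rho - h = h_c(\beta)-h$, which is the claim.

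The main obstacle is precisely the existence, determinism and concavity of the quenched constrained free energy $\psi(\rho)$, together with the uniformity in $\rho$ needed to exchange the suprema with the limits above (and, for the second summation, to control the tail of large $n$, i.e.\ of sparse contacts). This is the genuinely quenched large-deviation input: a crude first-moment estimate (Markov combined with Borel--Cantelli applied to $\E[G_N]=e^{N\lambda(\beta)}$, where $\lambda(\beta) := \log\E[e^{\beta\omega_0}]$) only yields $\limsup_N\frac1N\log G_N \le \lambda(\beta)$, i.e.\ the \emph{annealed} critical value $h_c^{\mathrm{ann}}(\beta)=\lambda(\beta)$, which in the disorder-relevant regime strictly exceeds $h_c(\beta)$. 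Extracting the correct quenched constant is where I expect to rely on the empirical-process / word-sequence representation of the renewal and its subadditive analysis, namely the contribution of \cite{ch10}; the reduction to $G_N$, the summation identity, and the Legendre bookkeeping above are then elementary.
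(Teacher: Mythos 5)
Your reduction to $G_N = e^{Nh}F_N^\bho$ and the Tonelli identity $\sum_{n\ge 1}Z_n^\bho = \sum_{N\ge 1}F_N^\bho$ are both correct (the paper uses the same interchange of sums, though in the proof of Theorem~\ref{Zborne} rather than inside this lemma). Your lower bound is complete and is genuinely different from the paper's route: the paper obtains both inequalities at once by quoting \cite{ch10}, whereas you derive $\limsup_N \frac1N \log G_N \ge h_c(\beta)$ from nothing more than the divergence of $\sum_n Z_n^{\beta,h',\omega}$ in the localized phase $h' < h_c(\beta)$. That argument is clean and self-contained; the only caveat is that when $h_c(\beta)=0$ you must probe with negative $h'$, which the formulas permit but the paper's conventions ($h\ge 0$, and implicitly $\sum_n K(n)=1$) do not spell out.

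The upper bound, however, has a genuine gap, and it sits exactly where the paper's proof does its only real work. Everything in your sketch hinges on the existence, determinism and concavity of the quenched density-constrained free energy $\psi(\rho)$, together with uniformity in $\rho$ down to $\rho=0$. You assert these and attribute them to \cite{ch10}, but \cite{ch10} does not provide a $\psi(\rho)$ theory in this form: their objects are generating functions in the endpoint variable and the quenched word-sequence LDP of \cite{bgh10}, and the paper's proof is a precise assembly of three of their statements, namely (3.11) (which \emph{is} the identity $\limsup_N \frac1N\log F_N^\bho = -h + S^{\text{que}}(\beta;1)$), (3.13) (which gives $h_c(\beta)=S^{\text{que}}(\beta;1^-)$), and the continuity $S^{\text{que}}(\beta;1^-)=S^{\text{que}}(\beta;1)$ extracted from steps 1 and 4 of the proof of their Lemma 3.3. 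That continuity statement is precisely your ``tail of large $n$ / sparse contacts'' issue (letting $z\to 1^-$ in the generating variable corresponds to letting $\rho\to 0$), i.e.\ the one step you explicitly wave at. As you correctly note, a first-moment bound only yields the annealed value $\log\E[e^{\beta\omega_0}]-h$, so there is no cheap substitute: to turn your sketch into a proof you would either have to construct the $\psi(\rho)$ theory from scratch (a substantial quenched large-deviations undertaking) or build the dictionary to the actual statements of \cite{ch10} --- at which point you have essentially reproduced the paper's citation-based proof.
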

\begin{proof}[Proof of Lemma~\ref{lem:ch10}]
Although this result is not stated as a proposition in \cite{ch10}, the authors give all the necessary elements to prove it. Indeed, we can start from \cite[(3.11)]{ch10}, which reads
$$
\limsup_{N \to +\infty} \frac{1}{N} \log F_N^\bho = -h + S^{\text{que}}(\beta;1),
$$
where $S^{\text{que}}(\beta;z)$ is defined in \cite[(3.10)]{ch10}. We then learn from \cite[(3.13)]{ch10} that 
$$
h_c(\beta) = S^{\text{que}}(\beta;1^-),
$$
so what remains to see is that 
$$
S^{\text{que}}(\beta;1^-) = S^{\text{que}}(\beta;1).
$$
Clearly, $S^{\text{que}}(\beta;1^-) \le S^{\text{que}}(\beta;1)$. On the other hand, it is shown in step 1 of the proof of \cite[Lemma 3.3]{ch10} that $S^{\text{que}}(\beta;1) \le A(\beta)$, where $A(\beta)$ is defined in \cite[(3.21)]{ch10}. Finally, step 4 shows that $A(\beta) \le S^{\text{que}}(\beta;1^-)$, hence $S^{\text{que}}(\beta;1) \le S^{\text{que}}(\beta;1^-)$, which finishes the proof.
\end{proof}

\begin{proof}[Proof of Theorem~\ref{Zborne}]
The proof is close to \cite[Section~3.2]{ch10}. We can decompose $Z_n$ the following way~:
$$
Z_n^\bho = \sum_{N = 1}^{+\infty} \sum_{0 = l_0 < l_1 < \cdots < l_N = n} \prod_{i=0}^{N-1} K(l_{i+1}-l_i)e^{(\beta \omega_{l_i} - h)}.
$$
An interversion of sums then leads to
$$
\sum_{n = 1}^{+\infty} Z_n^\bho = \sum_{N= 1}^{+ \infty} F_N^\bho,
$$
and Lemma~\ref{lem:ch10} ensures the almost sure convergence of the second series when $h > h_c(\beta)$.
\end{proof}

For an event $A$, let us write $Z_n^\bho(A)$ for the quantity
$$
\EE\left[ \exp\left( \sum_{k = 0}^{n-1} (\beta \omega_k - h) \1_{\{k \in \tau\}} \right) \1_{\{n \in \tau\}} \ ; \ A \right].
$$
In words, $Z_n^\bho(A)$ is a partition function in which one integrates with respect to $\PP$ only on the event $A$. In order to prove Theorem~\ref{contact}, we first give a refined version of Theorem~\ref{Zborne}, which goes as follows. 

\begin{prop}
\label{p:main}
Let $\beta \ge 0$ and $h > h_c(\beta)$. For every $\eps > 0$ and for almost every environment, there exist $N_\eps,C_\eps$ such that for any $N \ge N_\eps$~:
$$
 \sum_{n = 1}^{+\infty} Z_n^\bho(E_{n,N}) \le C_\eps e^{-N(h-h_c(\beta)-\eps)}.
$$
\end{prop}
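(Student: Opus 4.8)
The plan is to follow the proof of Theorem~\ref{Zborne} almost verbatim, while keeping track of the number of contact points. I would start from the renewal decomposition
$$
Z_n^\bho = \sum_{N'=1}^{+\infty}\ \sum_{0=l_0<l_1<\cdots<l_{N'}=n}\ \prod_{i=0}^{N'-1}K(l_{i+1}-l_i)e^{(\beta\omega_{l_i}-h)},
$$
in which the term indexed by $N'$ gathers exactly those trajectories with $N'+1$ points in $\tau\cap[0,n]$ (namely $l_0=0,\dots,l_{N'}=n$). Since $E_{n,N}=\{|\tau\cap[0,n]|>N\}$ selects precisely the trajectories with $N'+1>N$, that is $N'\ge N$, restricting the sum to $N'\ge N$ yields
$$
Z_n^\bho(E_{n,N})=\sum_{N'=N}^{+\infty}\ \sum_{0=l_0<\cdots<l_{N'}=n}\ \prod_{i=0}^{N'-1}K(l_{i+1}-l_i)e^{(\beta\omega_{l_i}-h)}.
$$

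First I would sum over $n\ge1$ and interchange the order of summation, which is legitimate because every summand is nonnegative. Summing over $n$ simply removes the endpoint constraint $l_{N'}=n$, so that for each fixed $N'$ the inner double sum becomes $F_{N'}^\bho$ as defined in~(\ref{def:F_N}). This gives the identity
$$
\sum_{n=1}^{+\infty}Z_n^\bho(E_{n,N})=\sum_{N'=N}^{+\infty}F_{N'}^\bho,
$$
which is exactly the tail, starting at index $N$, of the series already treated in Theorem~\ref{Zborne}.

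It then remains to control this tail. By Lemma~\ref{lem:ch10}, for almost every environment $\limsup_{N'\to\infty}\tfrac1{N'}\log F_{N'}^\bho=h_c(\beta)-h<0$. Fixing $\eps\in(0,h-h_c(\beta))$, the definition of the limsup provides (almost surely) an $N_\eps$ such that $F_{N'}^\bho\le e^{-N'(h-h_c(\beta)-\eps)}$ for every $N'\ge N_\eps$. For $N\ge N_\eps$ the identity above is then dominated by a convergent geometric series,
$$
\sum_{n=1}^{+\infty}Z_n^\bho(E_{n,N})\le\sum_{N'=N}^{+\infty}e^{-N'(h-h_c(\beta)-\eps)}=\frac{e^{-N(h-h_c(\beta)-\eps)}}{1-e^{-(h-h_c(\beta)-\eps)}},
$$
so the claim follows with $C_\eps=\bigl(1-e^{-(h-h_c(\beta)-\eps)}\bigr)^{-1}$.

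The only place where care is needed is the combinatorial matching between the event $E_{n,N}$ and the truncation of the decomposition at $N'\ge N$; everything else is routine. Two features make the argument go through cleanly: the nonnegativity of the summands, which justifies the interchange of sums, and the fact that the bound in Lemma~\ref{lem:ch10} holds for \emph{all} sufficiently large indices, not merely along a subsequence. The latter is what allows the entire tail $\sum_{N'\ge N}F_{N'}^\bho$ to be dominated, and the uniformity in $N\ge N_\eps$ comes for free from the geometric summation. In particular, no concentration or regularity assumption on $\omega$ beyond what Lemma~\ref{lem:ch10} already encodes is required.
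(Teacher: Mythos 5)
Your proof is correct and follows essentially the same route as the paper: the same renewal decomposition of $Z_n^\bho(E_{n,N})$ by the number of contact points, the same interchange of sums yielding the tail $\sum_{N'\ge N}F_{N'}^\bho$, and the same application of Lemma~\ref{lem:ch10} followed by geometric summation (the paper uses $\eps/2$ in the intermediate bound where you use $\eps$, a cosmetic difference). The combinatorial matching you flag as the delicate point is handled identically in the paper.
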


\begin{proof}
We can assume that $\eps < h - h_c(\beta)$. Note that, for any $n$ and $N_0$,
$$
Z_n^\bho(E_{n,N_0}) = \sum_{N = N_0}^{+\infty} \sum_{0 = l_0 < l_1 < \cdots < l_N = n} \prod_{i=0}^{N-1} K(l_{i+1}-l_i)e^{(\beta \omega_{l_i} - h)}.
$$
By an interversion of sums, we obtain that
$$
\sum_{n = 1}^{+\infty} Z_n^\bho(E_{n,N_0}) = \sum_{N = N_0}^{+\infty} F_N^\bho.
$$
By Lemma~\ref{lem:ch10}, there exists $N_\eps$ such that for every $N \ge N_\eps$,
$$
F_N^\bho \le e^{-N(h - h_c(\beta) - \eps/2)},
$$
and as a consequence, for every $N_0 \ge N_\eps$, one has
$$
\sum_{n = 1}^{+\infty} Z_n^\bho(E_{n,N_0}) \le \sum_{N = N_0}^{+\infty} e^{-N(h - h_c(\beta) - \eps/2)},
$$
which implies the announced claim.
\end{proof}

\begin{proof}[Proof of Theorem~\ref{contact}]
Note that
$$
\PP^\bho(E_{n,N}) = \frac{Z_n^\bho(E_{n,N})}{Z_n^\bho}.
$$
The numerator can be bounded from above using Proposition~\ref{p:main}. For the denominator, one can use the bound
$$
Z_n^\bho \ge K(n) e^{\beta \omega_0-h},
$$
which proves the desired result.
\end{proof}

\end{document}